\numberwithin{equation}{section}
\providecommand{\norm}[1]{\big\lVert#1\big\rVert}
\newcommand{\R}{\mathbb{R}}
\newcommand{\Rd}[1][d]{\mathbb{R}^{#1}}
\newcommand{\Z}{\mathbb{Z}}
\newcommand{\N}{\mathbb{N}}
\newcommand{\Lt}[1][d]{L^2(\R^{#1})}
\newcommand{\A}{\mathcal{A}}
\newcommand{\G}{\mathcal{G}}
\newcommand{\F}{\mathcal{F}}
\newcommand{\V}{\mathcal{V}}
\newcommand{\W}{\mathcal{W}}
\newcommand{\Lp}[1]{L^#1}
\newcommand{\lp}[1]{\ell^#1}
\renewcommand{\l}{\lambda}
\renewcommand{\L}{\Lambda}
\newcommand{\vol}{\textnormal{vol}}
\newcommand{\J}{
	\left(
		\begin{array}{rc}
		0 & I\\
		-I & 0
		\end{array}
	\right)
}
\DeclareMathOperator*{\essinf}{ess\,inf}
\DeclareMathOperator*{\esssup}{ess\,sup}
\theoremstyle{plain}
\newtheorem{theorem}{Theorem}[section]
\theoremstyle{plain}
\theoremstyle{plain}
\newtheorem{lemma}[theorem]{Lemma}
\theoremstyle{plain}
\newtheorem{proposition}[theorem]{Proposition}
\theoremstyle{definition}
\newtheorem{definition}[theorem]{Definition}
\theoremstyle{remark}
\theoremstyle{remark}
\theoremstyle{definition}
\begin{document}
\title[A Short Note on the Frame Set of Odd Functions]{A Short Note on the Frame Set of Odd Functions}
\author[Markus Faulhuber]{Markus Faulhuber}
\address{Analysis Group, Department of Mathematical Sciences, NTNU Trondheim\\ Sentralbygg 2, Gløshaugen, Trondheim, Norway}
\email{markus.faulhuber@ntnu.no}
\thanks{The author was supported by the Erwin--Schrödinger program of the Austrian Science Fund (FWF): J4100-N32. The computational results presented have been achieved (in part) using the Vienna Scientific Cluster (VSC). The author wishes to thank Franz Luef for many fruitful discussions on the topic. The author thanks the anonymous referee for beneficial feedback on the first version of the manuscript.}

\begin{abstract}
	In this work we derive a simple argument which shows that Gabor systems consisting of odd functions of $d$ variables and symplectic lattices of density $2^d$ cannot constitute a Gabor frame. In the 1--dimensional, separable case, this is a special case of a result proved by Lyubarskii and Nes, however, we use a different approach in this work exploiting the algebraic relation between the ambiguity function and the Wigner distribution as well as their relation given by the (symplectic) Fourier transform. Also, we do not need the assumption that the lattice is separable and, hence, new restrictions are added to the full frame set of odd functions.
\end{abstract}

\subjclass[2010]{42C15, 81S30}
\keywords{Ambiguity Function, Feichtinger's Algebra, Frame Set, Gabor Frame, Wigner Distribution}

\maketitle

\section{Introduction and Main Result}\label{sec_Intro}
In this short note we show that the full frame set of any odd function of $d$ variables in Feichtinger's algebra cannot contain symplectic lattices of density $2^d$. In the 1--dimensional, separable case, this is a special case of a more general result derived by Lyubarskii and Nes \cite{LyubarskiiNes_Rational_2013} who could show that no odd window function $g \in S_0(\R)$ can produce a separable Gabor frame of redundancy $\frac{n+1}{n}$, $n \in \N$ by studying the vector--valued Zak transform and Zebulski--Zeevi matrices. For an alternative proof of this result see the survey article by Gröchenig \cite{Gro14}.

However, our arguments are somewhat simpler and hold for symplectic lattices in arbitrary dimension $d$, which makes up for the drawback that we do not derive more general results. The key argument is that the Wigner distribution is the symplectic Fourier transform of the ambiguity function and that they also fulfill a simple algebraic relation. Moreover, our arguments show that, after a proper scaling, the cross Wigner distribution of any function in Feichtinger's algebra and any even function in Feichtinger's algebra is an eigenfunction of the symplectic Fourier transform with eigenvalue 1 and the pairing with any odd function in Feichtinger's algebra is an eigenfunction with eigenvalue -1.

This work concerns the fine structure of Gabor frames as described in \cite{Gro14}, i.e., relations between the properties of a fixed window and its frame set. For a (window) function $g \in \Lt$ and an index set $\L \subset \Rd[2d]$, we denote the resulting Gabor system by $\G(g,\L)$. The (full) frame set of the window $g$ is given by
\begin{equation}
	\mathfrak{F}_{full}(g) = \{ \L \subset \Rd[2d], \L \text{ a lattice} \mid \G(g,\L) \text{ is a frame} \}.
\end{equation}
Inspired by the work of Lemvig \cite{Lemvig_Hermite_2016}, the original intention of this short note was to show up simple restrictions for the full frame set of odd (1--dimensional) Hermite functions by showing that certain sums vanish, but the restriction to this very special class of functions turned out to be unnecessary. Unfortunately, we do not get any new insights into the frame set of even (Hermite) functions. Among other counterexamples, Lemvig showed that the square lattice of density 2 does not generate a Gabor frame for the second Hermite function (the Gaussian being indexed as 0--th Hermite function), which was the first known obstruction to the frame set of the second Hermite function. Numerical inspections suggest that, for the second Hermite function, among all separable lattices of density 2 the square lattice is the only lattice which does not yield a Gabor frame, in particular, in case of the square lattice the lower frame bound is zero and it yields the global minimum of the lower frame bound seen as a function of the lattice parameters. This example stands in sharp contrast to the results given in \cite{FaulhuberSteinerberger_Theta_2017}, where it is shown that under the same assumptions, but using the Gaussian instead of the second Hermite function, the square lattice gives the global maximum of the lower frame bound seen as a function of the lattice parameters. The common theme, however, is that in both cases the highest possible symmetry of the lattice leads to extremal frame bounds. It was proven in \cite{Faulhuber_Hexagonal_2018} that, for a Gabor frame of even redundancy with standard Gaussian window, the hexagonal lattice yields the smallest upper frame bound among all lattices. We conjecture that the hexagonal lattice should also give the largest lower frame bound in this case. So, we pose the following question: For the second Hermite function, does the Gabor system generated by  the hexagonal lattice of density 2 have a positive lower frame bound? The results by Lemvig tempt us to think that this might not the be case, but numerical inspections say that we actually have a Gabor frame with approximate lower frame bound $0.29 \dots$ .

Our main result, however, concerns odd windows in Feichtinger's algebra which we denote by $S_0(\Rd)$ (another common notation is $M^1(\Rd)$).

\begin{theorem}[Main Result]\label{thm_main}
	Let $g \in S_0(\Rd)$ be an odd function, i.e., $g(t) = -g(-t)$ and let $\L \subset \Rd[2d]$ be a symplectic lattice in the time--frequency plane. If $\vol(\L) = 2^{-d}$ then $\G(g, \L)$ cannot be a Gabor frame, or, in shorter notation:
	\begin{equation}
		\text{If } g \in S_0(\Rd), \, g(t) = -g(-t) \text{ and } \vol(\L) = 2^{-d}, \, \L \text{ symplectic} \quad \Longrightarrow \quad \L \notin \mathfrak{F}_{full}(g).
	\end{equation}
\end{theorem}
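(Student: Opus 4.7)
The plan is to combine three ingredients---the relation $W(f,g) = \F_\sigma A(f,g)$, the simple algebraic relation between $W(f,g)$ and $A(f,g)$ which introduces a sign $\epsilon_g = \pm 1$ according to the parity of $g$, and the self--duality $\L^\circ = 2\L$ of symplectic lattices with $\vol(\L) = 2^{-d}$---together with Wexler--Raz biorthogonality, to produce an identity that forbids the existence of a dual window.

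First I would establish the algebraic relation. A one--line change of variables in the defining integral of the cross Wigner distribution gives, for $f, g \in \So(\Rd)$,
\[
	W(f, g)(x, \omega) = 2^d\, e^{4\pi i\, x \cdot \omega}\, V_{\tilde g} f(2x, 2\omega), \qquad \tilde g(t) = g(-t),
\]
where $V_h$ denotes the short--time Fourier transform. When $g$ has definite parity we have $\tilde g = \epsilon_g\, g$ with $\epsilon_g = +1$ for even $g$ and $\epsilon_g = -1$ for odd $g$; combining the above with $W(f, g) = \F_\sigma A(f, g)$ and the analogous representation of $A(f, g)$ in terms of $V_g f$ then yields the scaling symmetry $\F_\sigma W(f, g)(z) = \epsilon_g\, 2^{-d}\, W(f, g)(z/2)$. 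A short computation with a symplectic generator---writing $\L = \tfrac{1}{\sqrt{2}} M \Z^{2d}$ with $M^T J M = J$---shows that every symplectic lattice with $\vol(\L) = 2^{-d}$ satisfies $\L^\circ = 2\L$.

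Next I would apply the symplectic Poisson summation formula to $W(f,g) \in \So(\Rd[2d])$ on $\L$ and invoke the two facts above:
\[
	\sum_{\l \in \L} W(f, g)(\l) = \vol(\L)^{-1} \sum_{\mu \in \L^\circ} \F_\sigma W(f,g)(\mu) = \epsilon_g \sum_{\mu \in \L^\circ} W(f, g)(\mu/2) = \epsilon_g \sum_{\l \in \L} W(f, g)(\l).
\]
For odd $g$ we have $\epsilon_g = -1$, which forces $\sum_{\l \in \L} W(f, g)(\l) = 0$ for every $f \in \So(\Rd)$. Translating this back through the identity of the previous step and reindexing $\mu = 2\l \in \L^\circ$, the vanishing takes the form
\[
	\sum_{\mu \in \L^\circ} e^{\pi i\, x_\mu \cdot \omega_\mu}\, \langle f, \pi(\mu) g\rangle = 0 \qquad \text{for every } f \in \So(\Rd).
\]

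Finally I would close the argument using Wexler--Raz duality. If $\G(g, \L)$ were a frame then, since $g \in \So(\Rd)$, the spectral invariance of Feichtinger's algebra ensures that the canonical dual $h = S_{g, \L}^{-1} g$ also lies in $\So(\Rd)$, and the Wexler--Raz biorthogonality relations read $\langle h, \pi(\mu) g\rangle = \vol(\L)\, \delta_{\mu, 0}$ on $\L^\circ$. Substituting $f = h$ into the previous identity collapses all terms except the one at $\mu = 0$, yielding $0 = \vol(\L) = 2^{-d}$, a contradiction. The main obstacle I foresee is establishing the algebraic relation cleanly: tracking sign and dilation factors in the $W$--$A$ relation under the various (symplectic) Fourier conventions requires care, but once that identity is in hand the remaining steps are short applications of standard machinery.
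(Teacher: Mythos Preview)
Your argument is correct. The core vanishing identity you derive---$\sum_{\mu \in \L^\circ} e^{\pi i x_\mu \cdot \omega_\mu}\langle f,\pi(\mu)g\rangle = 0$ for all $f\in S_0(\Rd)$---is exactly the content of the paper's Lemma~\ref{lem_sum_0} (rewritten via $\A_g f(\mu)=e^{\pi i x_\mu\cdot\omega_\mu}\langle f,\pi(\mu)g\rangle$), and your derivation of it via Poisson summation, the relation $\F_\sigma W_g f=\A_g f$, the algebraic identity $\A_g f = 2^{-d} W_{g^\vee} f(\cdot/2)$, and $\L^\circ=2\L$ coincides with the paper's route to that lemma.

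Where you diverge is in the final step. The paper invokes Janssen's Laurent--operator description of the sharp frame bounds (Proposition~\ref{pro_series}): for $\vol(\L)^{-1/d}\in\N$ the lower bound equals $\essinf_z \vol(\L)^{-1}\sum_{\l^\circ}\A g(\l^\circ)e^{2\pi i\sigma(\l^\circ,z)}$, and evaluating this series at $z=0$ gives zero by the vanishing lemma. You instead assume a frame exists, use the Gr\"ochenig--Leinert spectral invariance to place the canonical dual $h$ in $S_0(\Rd)$, and then plug $f=h$ into the vanishing identity so that Wexler--Raz collapses the sum to $\vol(\L)\neq 0$. Both conclusions are valid; the paper's approach is more explicit in that it actually identifies the lower frame bound as zero (and pinpoints where the infimum is attained), at the cost of carrying along the Laurent/Toeplitz spectral machinery and the phase--factor bookkeeping needed to check the matrix is constant along diagonals. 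Your approach is cleaner once the vanishing identity is in hand, but it trades that machinery for the nontrivial spectral invariance theorem guaranteeing $h\in S_0$.
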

Theorem \ref{thm_main} particularly implies that for $d=1$ no lattice of density 2 can be contained in the frame set of an odd function from Feichtinger's algebra.

This work is structured as follows:
\begin{itemize}
	\item In Section \ref{sec_TFA} we recall the basic properties of Gabor frames for the Hilbert space $\Lt$. After that, we introduce quadratic representations of a function $f \in \Lt$ with respect to a (fixed) window $g \in \Lt$, namely the short--time Fourier transform, the ambiguity function and the Wigner distribution. We show their algebraic relations as well as their relation under the symplectic Fourier transform and introduce the symplectic version of Poisson's summation formula. Also, we will see that Feichtinger's algebra is a convenient setting for our purposes.
	
	\medskip	
	
	\item In Section \ref{sec_sharp_bounds} we show how sharp frame bounds can be calculated, using the results established by Janssen in the 1990s. These results finally lead to the proof of Theorem \ref{thm_main}.
	
	
\end{itemize}

\section{Gabor Frames and Time--Frequency Analysis in a nutshell}\label{sec_TFA}
We consider Gabor frames for the Hilbert space of square integrable functions in $d$--dimensional Euclidean space $\Lt$. Concerning the notation we follow mainly the textbook of Gröchenig \cite{Gro01}. A more recent introduction to the topic is the $2^{nd}$ edition of Christensen's textbook \cite{Christensen_2016}.

As our functions will be defined pointwise and at least continuous in the remainder of this work the following notation for the inner product in $\Lt$ is justified;
\begin{equation}
	\langle f, g \rangle = \int_{\Rd} f(t) \overline{g(t)} \, dt.
\end{equation}
For two vectors $t$ and $t'$ in $\Rd$ we denote the Euclidean scalar product by $t \cdot t'$.

The key elements in time-frequency analysis are the translation operator $T_x$ (time shift) and the modulation operator $M_\omega$ (frequency shift) which are defined as
\begin{equation}
	T_x f(t) = f(t-x) \qquad \text{ and } \qquad M_\omega f(t) = e^{2 \pi i \omega \cdot t} f(t).
\end{equation}
For a function in the Schwartz space $\mathcal{S}(\Rd)$ we define the Fourier transform by
\begin{equation}
	\F f(\omega) = \int_\R f(t) e^{-2 \pi i \omega \cdot t} \, dt,
\end{equation}
which extends to a unitary operator on $\Lt$ by the usual density argument. The Fourier transform has the well--known properties of interchanging translation and modulation, i.e.,
\begin{equation}
	\F (T_xf) = M_{-x} \, \F f \qquad \text{ and } \qquad \F (M_\omega f) = T_\omega \, \F f.
\end{equation}
The translation (time shift) and modulation (freuqency shift) operator do not commute in general, but they fulfill the following commutation relation
\begin{equation}\label{eq_comm_rel}
	M_\omega T_x = e^{2 \pi i \omega \cdot x} T_x M_\omega.
\end{equation}
Hence, the combination of the two operators is called a time--frequency shift and usually denoted by
\begin{equation}
	\pi (\l) = M_\omega T_x, \qquad \l = (x,\omega) \in \R^2,
\end{equation}
where $\l$ is a point in the time--frequency plane or phase space. The composition of two time--frequency shifts is given by
\begin{equation}
	\pi(\l)\pi(\l') = e^{-2 \pi i x \cdot \omega'} \pi(\l + \l').
\end{equation}

A Gabor system is a collection of time--frequency shifted copies of a so--called window function $g \in \Lt[]$ with respect to an index set $\L \subset \R^2$ and it is denoted by
\begin{equation}
	\G(g, \L) = \{\pi(\l) g \mid \l \in \L \}.
\end{equation}
Throughout this work, $\L$ will be a lattice, i.e., a discrete subgroup of $\R^{2d}$. A lattice can be represented by an invertible matrix $M \in GL(2d,\R)$ and is then given by $\L = M \Z^{2d}$. The matrix $M$ is not unique since we can choose from countably many possible bases for $\Z^{2d}$. For example, if $d=1$, then any matrix $\mathcal{B}$ with integer entries and determinant 1, i.e., $\mathcal{B} \in SL(2,\Z)$, satisfies $\mathcal{B} \Z^2 = \Z^2$. Although the representing matrix is not unique its determinant is. We define the volume of a lattice $\L = M \Z^{2d}$ by
\begin{equation}
	\vol(\L) = |\det(M)|.
\end{equation}
The density of a lattice is given by the reciprocal of the volume, i.e., $\delta(\L) = \vol(\L)^{-1}$. Usually, a lattice is called separable if it can be written as $\alpha \Z^d \times \beta \Z^d$, $\alpha, \beta \in \R_+$. Alternative definitions of a separable lattice are that the generating matrix is a diagonal matrix, or, in the most general case, that the lattice separates as $M_1 \Z^d \times M_2 \Z^d$ with $M_1$, $M_2 \in GL(d,\R)$. For $d=1$ all definitions coincide.


A Gabor system $\G(g,\L)$ is called a Gabor frame if and only if the frame inequality is fulfilled, i.e.,
\begin{equation}\label{eq_frame}
	A \norm{f}^2 \leq \sum_{\l \in \L} \left| \langle f , \pi(\l) g \rangle \right|^2 \leq B \norm{f}^2, \quad \forall f \in \Lt,
\end{equation}
with positive constants $0 < A \leq B < \infty$ called frame bounds. In general, a Gabor frame is a redundant system and the redundancy of a Gabor system is given by the density of the underlying lattice. If all elements of the Gabor system $\G(g,\L)$ have unit norm, the redundancy also reflects itself in the frame bounds. We note that in the case of an orthonormal basis we have $ A = B = 1$.

\subsection{Symmetric Time--Frequency Shifts}\label{subsec_symmetric}
It will be advantageous to consider symmetric time--frequency shifts instead of usual time--frequency shifts. The symmetric time--frequency shift is given by
\begin{equation}\label{eq_TF_shift_symmetric}
	\rho(\l) = M_{\tfrac{\omega}{2}} T_x  M_{\tfrac{\omega}{2}} = T_{\tfrac{x}{2}} M_\omega  T_{\tfrac{x}{2}} = e^{-\pi i x \cdot \omega} \, \pi(\l).
\end{equation}
We note that
\begin{equation}
	\rho(\l) \rho(\l') = e^{-\pi i (x \cdot \omega' - x' \cdot \omega)} \rho(\l+\l').
\end{equation}
The Gabor system under consideration is then
\begin{equation}
	\widetilde{\G}(g, \L) = \{ \rho(\l) g \mid \l \in \L \}.
\end{equation}
This system is a frame if and only if there exist positive constants $0 < A \leq B < \infty$ such that
\begin{equation}\label{eq_frame_symmetric}
	A \norm{f}^2 \leq \sum_{\l \in \L} \left| \langle f , \rho(\l) g \rangle \right|^2 \leq B \norm{f}^2, \quad \forall f \in \Lt,
\end{equation}
It follows from \eqref{eq_TF_shift_symmetric} that the optimal constants $A,B$ in equations \eqref{eq_frame} and \eqref{eq_frame_symmetric} are the same. In particular, $\G(g, \L)$ is a frame if and only if $\widetilde{\G}(g, \L)$ is a frame. In the rest of this work we will work with the Gabor system $\widetilde{\G}(g, \L)$ as the phase factors are easier to handle in this case.

\subsection{Phase--Space Methods}
The short--time Fourier transform (STFT) and the ambiguity function are often used to measure time frequency concentration. They are defined in similar ways and, in fact, they only differ by a phase factor, i.e., a complex exponential of modulus 1. We will now introduce the necessary tools to prove Theorem \ref{thm_main}. For more details we refer to the textbooks of Folland \cite{Fol89}, de Gosson \cite{Gos11, Gosson_Wigner_2017} or Gröchenig \cite{Gro01}.
\begin{definition}[STFT]
	For $f \in \Lt$, the short--time Fourier transform with respect to the window $g \in \Lt[]$ is defined as
	\begin{equation}
		\V_g f(x, \omega) = \int_{\Rd} f(t) \overline{g(t-x)} e^{-2 \pi i \omega \cdot t} \, dt = \langle f, \pi(\l) g \rangle, \qquad \l = (x,\omega) \in \R^{2d}.
	\end{equation}
\end{definition}
Before we continue, we introduce the function space which will be most suitable for our intentions, namely Feichtinger's algebra $S_0(\Rd)$, introduced by Feichtinger in the early 1980s \cite{Fei81}. There are several equivalent definitions of $S_0(\Rd)$ and we prefer to use the following definition.
\begin{definition}[Feichtinger's Algebra]
	Feichtinger's algebra $S_0(\Rd)$ consists of all elements $g \in \Lt$ such that
	\begin{equation}
		\norm{\V_g g}_{L^1(\R^{2d})} = \iint_{\R^{2d}} \left| V_g g(\l) \right| \, d\l < \infty, \qquad \l = (x,\omega) \in \R^{2d} .
	\end{equation}
\end{definition}
We note the following properties of $S_0(\Rd)$. It is a Banach space, invariant under the Fourier transform and time--frequency shifts. It contains the Schwartz space $\mathcal{S}(\Rd)$ and it is dense in $\Lp{p}(\Rd)$, $p \in [1,\infty [$. It is for these properties that it is a quite popular function space in time--frequency analysis and the literature on the subject is huge. For more details on $S_0$ we refer to the survey by Jakobsen \cite{Jakobsen_S0_2018} and the references therein.

We turn to another time--frequency representation, which is defined similarly to the STFT.
\begin{definition}[Ambiguity Function]
	For $f, \, g \in \Lt$, the (cross) ambiguity function is defined as
	\begin{align}
		\A_g f(x, \omega) & = \int_{\Rd} f(t+\tfrac{x}{2}) \overline{g(t-\tfrac{x}{2})} e^{-2 \pi i \omega \cdot t} \, dt\\
		& = \langle \pi(-\tfrac{\l}{2}) f, \pi(\tfrac{\l}{2}) g \rangle = \langle f, \rho(\l) g \rangle, \qquad \l = (x,\omega) \in \R^{2d}.
	\end{align}
\end{definition}
Both, $\V_g f$ and $\A_g f$ are uniformly continuous on $\R^{2d}$. Due to relation \eqref{eq_TF_shift_symmetric}, which is a consequence of the commutation relation \eqref{eq_comm_rel}, we have that
\begin{equation}
	\A_g f(x,\omega) = e^{\pi i \omega \cdot x} \V_g f(x,\omega).
\end{equation}
In particular this means that $|\V_gf| \equiv |A_gf|$. We will now introduce a quadratic representation of a function $f \in \Lt$ which is usually used in quantum mechanics, the Wigner distribution.
\begin{definition}[Wigner Distribution]
	For $f, \, g \in \Lt$, the (cross) Wigner distribution is defined as
	\begin{equation}
		\W_g f(x, \omega) = \int_\R f(x+\tfrac{t}{2}) \overline{g(x-\tfrac{t}{2})} e^{-2 \pi i \omega \cdot t} \, dt, \qquad x,\omega \in \Rd.
	\end{equation}
\end{definition}
For the rest of this work, we will drop the index in all of the above definitions if $f = g$. The Wigner distribution is related to the ambiguity function (and, hence, in a similar way to the STFT) by the symplectic Fourier transform. In order to define the symplectic Fourier transform, we first equip our phase space with a symplectic structure. In what follows the vectors $\l = (x,\omega)$ and $\l' = (x', \omega')$ in $\R^{2d}$ are always seen as column vectors and the scalar product of two vectors in the phase space is again denoted by $\l \cdot \l'$. We define the symplectic form
\begin{equation}
	\sigma(\l,\l') = x \cdot \omega' - \omega \cdot x' = \l \cdot J \l' = \l^T J \l,
\end{equation}
where $J = \J$ is the standard symplectic matrix and $I$ the $d \times d$ identity matrix. A matrix $S$ is called symplectic if and only if it preserves the symplectic form, i.e.,
\begin{equation}
	\sigma(S \l, S \l') = \sigma(\l, \l'),
\end{equation}
or, equivalently,
\begin{equation}
	S^T J S = J.
\end{equation}

As mentioned, it will turn out to be convenient to use a slightly different version of the Fourier transform in phase space, the symplectic Fourier transform.
\begin{definition}[Symplectic Fourier Transform]
	For $F \in \mathcal{S}(\R^{2d})$ the symplectic Fourier transform is given by
	\begin{equation}
		\F_\sigma F (x, \omega) = \iint_{\R^2} F(\l') e^{-2 \pi i \, \sigma(\l,\l')} \, d\l', \quad \l = (x,\omega), \, \l' = (x', \omega') \in \R^{2d}.
	\end{equation}
\end{definition}
Of course, the symplectic Fourier transform extends to all of $\Lt[2d]$ by the usual density argument (just as the Fourier transform). A tool which is heavily exploited in time-frequency analysis is the Poisson summation formula which we will use for $2d$--dimensional lattices. The technical details for the Poisson summation formula to hold pointwise have been worked by Gröchenig in \cite{Gro_Poisson_1996}. Since our functions under consideration are in $S_0(\Rd)$, their Wigner distributions  as well as their ambiguity functions will be elements of Feichtinger's algebra in phase space, i.e., elements of $S_0(\R^{2d})$ (see \cite[chap.~5]{Jakobsen_S0_2018}). This assumption is sufficient for Poisson's summation formula to hold pointwise.
\begin{proposition}[Poisson Summation Formula]
	For $F \in S_0(\R^{2d})$ and a lattice $\L = M \Z^{2d}$ with dual lattice $\L^\bot = M^{-T} \Z^{2d}$ we have
	\begin{equation}
		\sum_{\l \in \L} F(\l+z) = \vol(\L)^{-1} \sum_{\l^\bot \in \L^\bot} \F F(\l^\bot) e^{2 \pi i \l^\bot \cdot z}, \quad \l, \l^\bot, z \in \R^{2d}.
	\end{equation}
\end{proposition}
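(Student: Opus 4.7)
The plan is the classical periodization argument for Poisson summation, which works cleanly here because Feichtinger's algebra supplies exactly the summability and Fourier-invariance needed to make every step valid pointwise. First I would define the $\L$-periodization
\begin{equation}
P(z) = \sum_{\l \in \L} F(\l + z).
\end{equation}
A standard property of $S_0(\R^{2d})$, recalled in the Jakobsen survey cited in the excerpt, is that $\sum_{\l \in \L} |F(\l + z)|$ is bounded uniformly in $z$ for any lattice $\L$. Hence the series defining $P$ converges absolutely and uniformly on compact sets, and $P$ is a continuous, $\L$-periodic function on $\R^{2d}$.

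Second I would compute the Fourier coefficients of $P$ on the torus $\R^{2d}/\L$. The characters of this torus are precisely the maps $z \mapsto e^{2\pi i \l^\bot \cdot z}$ with $\l^\bot \in \L^\bot$; the definition $\L^\bot = M^{-T}\Z^{2d}$ is chosen exactly so that $\l^\bot \cdot \l \in \Z$ whenever $\l \in \L$. Fixing a fundamental domain $D$ with $|D| = \vol(\L)$ and using the absolute convergence of $P$ to swap sum and integral, then substituting $z \mapsto z - \l$ in each summand, the translates $D - \l$ tile $\R^{2d}$ while the exponential is invariant, so
\begin{equation}
c_{\l^\bot} = \frac{1}{\vol(\L)} \int_D P(z) e^{-2\pi i \l^\bot \cdot z} \, dz = \frac{1}{\vol(\L)} \int_{\R^{2d}} F(z) e^{-2\pi i \l^\bot \cdot z} \, dz = \frac{\F F(\l^\bot)}{\vol(\L)}.
\end{equation}

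Third I would recover the stated identity by showing that the Fourier series $\sum_{\l^\bot} c_{\l^\bot} e^{2\pi i \l^\bot \cdot z}$ converges to $P(z)$ pointwise. Feichtinger's algebra is invariant under the Fourier transform, so $\F F \in S_0(\R^{2d})$, and the same lattice-sampling bound applied to the dual lattice yields $\sum_{\l^\bot \in \L^\bot} |\F F(\l^\bot)| < \infty$. The right-hand series is therefore absolutely and uniformly convergent to a continuous function of $z$, which by the uniqueness of Fourier series on a torus for continuous functions must equal $P(z)$.

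The main obstacle, and the whole reason for imposing $F \in S_0$, is obtaining the pointwise identity rather than merely an $L^2$ or almost-everywhere statement. $L^2$-convergence of the Fourier series on $\R^{2d}/\L$ is automatic, but pointwise equality requires absolute summability of the coefficients $\F F(\l^\bot)$. The two key inputs, that $S_0$ is Fourier-invariant and that $S_0$ elements are absolutely summable over every lattice, supply exactly this, and the proof then reduces to the routine periodization mechanics above.
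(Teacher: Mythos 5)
Your argument is the standard periodization proof of the Poisson summation formula and it is correct as written: the two $S_0$ facts you lean on --- that $S_0(\R^{2d})\subset W(C_0,\ell^1)$ so that lattice samples of $F$ and of $\F F$ are absolutely summable uniformly in the shift, and that $S_0$ is Fourier-invariant --- are exactly what makes the periodization $P$ continuous, the coefficient computation legitimate, and the Fourier series absolutely convergent, so that uniqueness for continuous functions on the torus closes the argument. The paper itself offers no proof of this proposition; it simply cites Gr\"ochenig's article on the Poisson summation formula and Jakobsen's survey for the statement that membership in $S_0$ suffices for pointwise validity, so your write-up supplies the standard argument that those references contain.
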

Instead of using the $2d$--dimensional Fourier transform we can adjust this result by using the symplectic Fourier transform and the adjoint lattice instead of the dual lattice. The adjoint of a lattice $\L = M \Z^{2d}$ is given by $\L^\circ = J M^{-T} \Z^{2d}$. Under the assumptions of Poisson's summation formula we get
\begin{equation}
	\sum_{\l \in \L} F(\l+z) = \vol(\L)^{-1} \sum_{\l^\circ \in \L^\circ} \F_\sigma F(\l^\circ) e^{2 \pi i \, \sigma(\l^\circ, z)}, \quad \l, \l^\circ, z \in \R^{2d}
\end{equation}

We say that a lattice is symplectic if its generating matrix is a multiple of a symplectic matrix, i.e., $\L = c \, S \Z^{2d}$ with $c > 0$ and $S \in Sp(d)$, with $Sp(d)$ being the set of all symplectic $2d \times 2d$ matrices. We note that symplectic matrices actually form a group under matrix multiplication and that any symplectic matrix has determinant 1 and, hence, $Sp(d) \subset SL(2d,\R)$. In general $Sp(d)$ is a proper subgroup of the special linear group $SL(2d,\R)$, only for $d = 1$ we have that $Sp(1) = SL(2,\R)$. In particular, it follows that any 2--dimensional lattice is symplectic. In general, it follows from the definition of a symplectic matrix that
\begin{equation}
	\L^\circ = \vol(\L)^{-1/d} \L, \quad \L \text{ symplectic},
\end{equation}	
because, by definition, $S \in Sp(d) \Leftrightarrow S = J S^{-T} J^{-1}$ and for $\L = c \, S \Z^2$, $c > 0$ we have $\L^\circ = c^{-1} J S^{-T} J^{-1} \Z^{2d} = c^{-1} J S^{-T} \Z^{2d}$, as $J^{-1}$ is just another choice of basis for $\Z^{2d}$. Hence, for $\L$ symplectic the adjoint lattice is only a scaled version of the original lattice.

As a last point in this section, we have a closer look at the relation between the ambiguity function (and hence the STFT) and the Wigner distribution. We start with their relation given by the symplectic Fourier transform.
\begin{proposition}\label{pro_symp_Fourier}
	For $f,g \in \Lt$, the ambiguity function and the Wigner distribution are symplectic Fourier transforms of each other, i.e,
	\begin{equation}
		\F_\sigma \left(\A_g f\right) (x,\omega) = \W_g f(x, \omega) \quad \text{ and } \quad \F_\sigma \left(\W_g f\right) (x,\omega) = \A_g f(x, \omega)
	\end{equation}
\end{proposition}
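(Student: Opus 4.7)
The plan is a direct computation by unwinding the definitions. Both $\A_g f(x,\omega)$ and $\W_g f(x,\omega)$ are partial Fourier transforms of the bilinear expression $(u,v)\mapsto f(u)\overline{g(v)}$, but along different ``diagonals'' in $\R^{2d}$: in the ambiguity function the center variable $t$ is the Fourier variable while the offset $x$ is free, whereas in the Wigner distribution these roles are swapped. The symplectic Fourier transform in phase space is precisely the tool that swaps them, so after plugging in the definitions and exchanging the order of integration the identity should drop out of Fourier inversion on $\R^d$.

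Concretely, starting from
\[
  \F_\sigma(\A_g f)(x,\omega) = \iint_{\R^{2d}} \left[\int_{\R^d} f\bigl(t + \tfrac{x'}{2}\bigr)\overline{g\bigl(t - \tfrac{x'}{2}\bigr)}\, e^{-2\pi i \omega' \cdot t}\, dt\right] e^{-2\pi i \sigma(\l, \l')}\, dx'\, d\omega',
\]
I would apply Fubini to exchange the integrations and collect all $\omega'$-dependent exponentials into a single factor $e^{-2\pi i \omega' \cdot h(t,x)}$ for a suitable affine function $h$ arising from $\sigma$. Integration in $\omega'$ then yields a delta distribution in $h(t,x)$, which, when integrated against $t$, collapses $t$ to an explicit linear function of $x$. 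What remains is a single integral in $x'$ with integrand $f(\cdots)\overline{g(\cdots)}\, e^{-2\pi i \omega \cdot x'}$, and a linear change of variables of Jacobian $\pm 1$ identifies the result with $\W_g f(x,\omega)$ as defined in the excerpt.

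The second identity is then immediate from the antisymmetry of $\sigma$: a short Fourier-inversion computation shows $\F_\sigma \circ \F_\sigma = \mathrm{Id}$ on $\Lt[2d]$, so applying $\F_\sigma$ to both sides of the first identity yields $\A_g f = \F_\sigma(\W_g f)$. The main technical obstacle is the swap of integration orders, since the iterated integral is not absolutely convergent and the $\omega'$-integral that produces the delta is only formal at the Lebesgue level. The cleanest remedy is to carry the argument out first for $f,g \in \mathcal{S}(\R^d)$, where both $\A_g f$ and $\W_g f$ lie in $\mathcal{S}(\R^{2d})$ and every step is rigorous, and then extend to $f, g \in \Lt$ (in particular to $S_0(\Rd)$) by density of $\mathcal{S}(\Rd)$ together with the continuity of $(f,g) \mapsto \A_g f$, $(f,g) \mapsto \W_g f$, and of $\F_\sigma$ on the appropriate $L^2$-spaces.
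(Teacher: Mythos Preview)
The paper does not actually prove this proposition; it is stated without proof as a standard fact, with the surrounding text referring the reader to the textbooks of Folland, de~Gosson, and Gr\"ochenig for details on phase--space methods. Your direct computation---integrating out $\omega'$ to produce a Dirac delta, collapsing the $t$-integration, and identifying the remaining $x'$-integral with $\W_g f$ after a linear change of variables, all carried out first on $\mathcal{S}(\Rd)$ and then extended by density---is exactly the standard textbook argument, and your deduction of the second identity from $\F_\sigma \circ \F_\sigma = \mathrm{Id}$ (via the antisymmetry of $\sigma$) is likewise standard and correct.
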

Also, we have the following algebraic relation between the ambiguity function and the Wigner distribution.
\begin{proposition}\label{pro_algebraic}
	For $f,g \in \Lt$, the ambiguity function and the Wigner distribution fulfill
	\begin{equation}
		\W_g f(x,\omega) = 2^d \A_{g^\vee} f(2x, 2\omega) \qquad \text{ and } \qquad
		\A_g f(x, \omega) = 2^{-d} \W_{g^\vee} f \left( \tfrac{x}{2}, \tfrac{\omega}{2} \right),
	\end{equation}
	where $g^\vee (t) = g(-t)$ denotes the reflection of $g$.
\end{proposition}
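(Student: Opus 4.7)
The plan is to prove both identities by direct computation from the definitions, using only a change of variables and the defining relation $g^\vee(t) = g(-t)$.

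First I would start with the definition of the Wigner distribution,
\begin{equation}
\W_g f(x,\omega) = \int_{\R^d} f\bigl(x + \tfrac{t}{2}\bigr)\,\overline{g\bigl(x - \tfrac{t}{2}\bigr)}\, e^{-2\pi i \omega \cdot t}\, dt,
\end{equation}
and apply the substitution $t = 2s$, $dt = 2^d\, ds$. This produces
\begin{equation}
\W_g f(x,\omega) = 2^d \int_{\R^d} f(x + s)\,\overline{g(x - s)}\, e^{-2\pi i (2\omega) \cdot s}\, ds.
\end{equation}
Next, writing $\overline{g(x-s)} = \overline{g^\vee(s-x)}$ and comparing with the definition of the ambiguity function
\begin{equation}
\A_h f(y,\eta) = \int_{\R^d} f\bigl(s + \tfrac{y}{2}\bigr)\,\overline{h\bigl(s - \tfrac{y}{2}\bigr)}\, e^{-2\pi i \eta \cdot s}\, ds
\end{equation}
evaluated at $h = g^\vee$, $y = 2x$, $\eta = 2\omega$, one reads off the first identity $\W_g f(x,\omega) = 2^d \A_{g^\vee} f(2x, 2\omega)$.

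For the second identity, the cleanest route is to apply the first identity to the pair $(f, g^\vee)$ in place of $(f,g)$; since $(g^\vee)^\vee = g$, this gives
\begin{equation}
\W_{g^\vee} f(x,\omega) = 2^d \A_g f(2x, 2\omega),
\end{equation}
and substituting $x \mapsto x/2$, $\omega \mapsto \omega/2$ yields $\A_g f(x,\omega) = 2^{-d}\W_{g^\vee} f(x/2,\omega/2)$. Alternatively, one may redo the substitution $t = s/2$ starting from the definition of $\A_g f$ to obtain the same conclusion.

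There is no real obstacle here: the only subtlety is keeping the factor of $2$ in the exponential and the factor $2^d$ from the Jacobian straight, and recognizing that replacing $\overline{g(x-s)}$ by $\overline{g^\vee(s-x)}$ is exactly what aligns the integrand with the definition of $\A_{g^\vee}$ at the scaled arguments. Both identities are equivalent once one observes $(g^\vee)^\vee = g$, so only one computation is strictly needed.
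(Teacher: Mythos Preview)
Your proof is correct. The paper does not actually supply a proof of this proposition; it is stated as a known identity (with references to the textbooks of Folland, de~Gosson, and Gr\"ochenig), so there is nothing to compare against. Your direct change-of-variables argument is precisely the standard verification one finds in those references, and your observation that the second identity follows from the first via $(g^\vee)^\vee = g$ is the cleanest way to avoid repeating the computation.
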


We proceed with some more results regarding the ambiguity function and the Wigner distribution which we will need in the end to prove our main result. But first, we introduce some notation. For a function $F$ in phase space, the isotropic dilation is given by
\begin{equation}
	D_\alpha F(x, \omega) = F(\alpha x, \alpha \omega), \quad \alpha \in \R_+.
\end{equation}
The behavior of this operator under the symplectic Fourier transform is given by
\begin{equation}\label{eq_symplectic_FT_dilation}
	\F_\sigma (D_\alpha F)(x, \omega) = \alpha^{-2d} D_{\frac{1}{\alpha}} \, \F_\sigma \, F(x, \omega).
\end{equation}
\begin{lemma}\label{lem_Fourier_symp}
	For $f,g \in \Lt$ with $g^\vee = g$ we have
	\begin{equation}
		\begin{aligned}
			\F_\sigma \left(D_{\sqrt{2}} \, \A_g f \right)(x, \omega) & = D_{\sqrt{2}} \, \A_g f(x, \omega),\\
			\F_\sigma \left(D_{\frac{1}{\sqrt{2}}} \, \W_g f \right)(x, \omega) & = D_{\frac{1}{\sqrt{2}}} \, \W_g f(x, \omega).
		\end{aligned}
	\end{equation}
	If $-g^\vee = g$ we have
	\begin{equation}
		\begin{aligned}
			\F_\sigma \left(D_{\sqrt{2}} \, \A_g f \right)(x, \omega) & = - D_{\sqrt{2}} \, \A_g f(x, \omega),\\
			\F_\sigma \left(D_{\frac{1}{\sqrt{2}}} \, \W_g f \right)(x, \omega) & = - D_{\frac{1}{\sqrt{2}}} \, \W_g f(x, \omega).
		\end{aligned}
	\end{equation}
\end{lemma}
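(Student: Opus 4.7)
The plan is to combine the two propositions that immediately precede the lemma: the symplectic-Fourier relation $\F_\sigma(\A_g f) = \W_g f$ (and its inverse) from Proposition \ref{pro_symp_Fourier}, together with the algebraic relation $\W_g f(x,\omega) = 2^d \A_{g^\vee} f(2x, 2\omega)$ from Proposition \ref{pro_algebraic}. Under the symmetry hypothesis $g^\vee = \varepsilon g$ with $\varepsilon \in \{+1,-1\}$, the second relation becomes $\W_g f = \varepsilon \, 2^d D_2 \A_g f$, i.e. the Wigner distribution and the ambiguity function of $g$ agree up to sign and a dilation by $2$. The eigenvalue equation for the symplectic Fourier transform should then pop out once these two relations are chained together with the dilation rule \eqref{eq_symplectic_FT_dilation}.

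Concretely, I would first apply \eqref{eq_symplectic_FT_dilation} with $\alpha = \sqrt 2$ to pull the dilation out of the symplectic Fourier transform: $\F_\sigma(D_{\sqrt 2} \A_g f) = 2^{-d} D_{1/\sqrt 2} \F_\sigma(\A_g f)$. By Proposition \ref{pro_symp_Fourier}, the inner symplectic Fourier transform equals $\W_g f$, and by Proposition \ref{pro_algebraic} combined with $g^\vee = \varepsilon g$, this is $\varepsilon \, 2^d D_2 \A_g f$. Finally, using the semigroup identity $D_{1/\sqrt 2} D_2 = D_{\sqrt 2}$, the factors of $2^{\pm d}$ cancel and we are left with $\varepsilon D_{\sqrt 2} \A_g f$, which is exactly the claimed statement for the ambiguity function.

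The statement for the Wigner distribution is obtained by the symmetric argument. I would apply $\F_\sigma$ to $D_{1/\sqrt 2} \W_g f$, again use \eqref{eq_symplectic_FT_dilation} (now with $\alpha = 1/\sqrt 2$) to get $2^d D_{\sqrt 2} \F_\sigma(\W_g f)$, invoke Proposition \ref{pro_symp_Fourier} to replace $\F_\sigma(\W_g f)$ with $\A_g f$, and use the second identity from Proposition \ref{pro_algebraic} together with $g^\vee = \varepsilon g$ to rewrite $\A_g f = \varepsilon \, 2^{-d} D_{1/2} \W_g f$. The telescoping $D_{\sqrt 2} D_{1/2} = D_{1/\sqrt 2}$ and the cancellation of $2^{\pm d}$ then yield $\varepsilon D_{1/\sqrt 2} \W_g f$, as desired.

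I do not expect any real obstacle here: the proof is a bookkeeping exercise once one realises that under the symmetry assumption on $g$, the algebraic relation of Proposition \ref{pro_algebraic} becomes a relation between $\A_g f$ and $\W_g f$ involving only a dilation and a sign, so that the symplectic Fourier transform (which swaps them by Proposition \ref{pro_symp_Fourier}) ends up acting on the suitably scaled versions as multiplication by $\pm 1$. The only point to double-check is the compatibility of the constants $2^{\pm d}$ coming from the dilation rule with those coming from the algebraic relation, which works out exactly because the dilation factor $\sqrt 2$ is the geometric mean of $1$ and $2$.
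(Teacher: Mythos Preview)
Your proposal is correct and follows essentially the same route as the paper: apply the dilation rule \eqref{eq_symplectic_FT_dilation} to pull out $D_{\sqrt{2}}$, use Proposition~\ref{pro_symp_Fourier} to turn $\F_\sigma(\A_g f)$ into $\W_g f$, then invoke Proposition~\ref{pro_algebraic} together with $g^\vee=\varepsilon g$ to return to $\A_g f$ up to a dilation and sign, and finally combine the dilations. The only cosmetic difference is that the paper rewrites $2^{-d}D_{1/\sqrt{2}}\W_g f$ as $D_{\sqrt{2}}\bigl(2^{-d}\W_g f(\tfrac{x}{2},\tfrac{\omega}{2})\bigr)$ before applying the algebraic relation, whereas you substitute $\W_g f=\varepsilon\,2^d D_2\A_g f$ first and then compose $D_{1/\sqrt{2}}D_2=D_{\sqrt{2}}$; the bookkeeping is identical.
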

\begin{proof}
	This is an immediate consequence of Proposition \ref{pro_symp_Fourier} and Proposition \ref{pro_algebraic}. By using Proposition \ref{pro_symp_Fourier} and \eqref{eq_symplectic_FT_dilation} we get
	\begin{equation}
		\F_\sigma (D_{\sqrt{2}} \, \A_g f) (x, \omega) =  2^{-d} \, D_{\frac{1}{\sqrt{2}}} \W_g f (x, \omega)
		= D_{\sqrt{2}} \left(2^{-d} \W_g f \left( \tfrac{x}{2}, \tfrac{\omega}{2} \right) \right).
	\end{equation}
	Now, by the algebraic property from Proposition \ref{pro_algebraic} we conclude that
	\begin{equation}
		\F_\sigma (D_{\sqrt{2}} \, \A_g f)(x, \omega) = D_{\sqrt{2}} \, \A_{g^\vee} f(x, \omega).
	\end{equation}
	In a similar manner we derive the analogous statement for $\W_g f$. The results follow from the definitions of $\A_g f$ and $\W_g f$ and the assumptions that $\pm g^\vee = g$.
\end{proof}
In \cite{PeiLiu_FourierHermite_2012} it was shown that the (suitably scaled) cross Wigner distributions of two Hermite functions as well as tensor products of Hermite functions are eigenfunctions of the planar (2--dimensional) Fourier transform with eigenvalues $\pm 1$, depending on the pairing. In \cite{Lanzara_2017} another example of a ``nonstandard" eigenfunction of the planar Fourier transform was given, namely the function $F(x, \omega) = \frac{\sqrt{x^2+\omega^2}}{x \omega}$ (integrals have to be understood as Cauchy principal values in this case). All these examples are invariant under rotation (also the presented set of eigenfunctions is countable). Lemma \ref{lem_Fourier_symp} gives us an uncountable set of examples of eigenfunctions of the symplectic Fourier transform which do not necessarily possess any rotational symmetries.

For the next result, we recall that $\W_g f \in L^1(\R^{2d})$ if and only if $f,g \in S_0(\Rd)$ (see \cite[chap.~7]{Gosson_Wigner_2017} or \cite{Jakobsen_S0_2018}). Also, if $f,g \in S_0(\Rd)$, then the Wigner distribution $\W_g f$ is in $S_0(\R^{2d})$ (see \cite{Jakobsen_S0_2018}), which means that
\begin{equation}
	\W_g f \in L^1(\R^{2d}) \Longleftrightarrow \W_g f \in S_0(\R^{2d}).
\end{equation}
This statement holds, of course, for the ambiguity function $\A_g f$ and for the STFT $\V_g f$. Also, the assumptions for Poisson's summation formula to hold pointwise are met and we derive the following result.
\begin{lemma}\label{lem_sum_0}
	Let $f,g \in S_0(\Rd)$ and let $g$ be an odd function and $\L$ a symplectic lattice with $\vol(\L)^{-1} = 2^d$. Then
	\begin{equation}
		\begin{aligned}
			\sum_{\l \in \L} \W_g f(\l) & = - \sum_{\l \in \L} \W_g f(\l) = 0, \\
			\sum_{\l^\circ \in \L^\circ} \A_g f(\l^\circ) & = - \sum_{\l^\circ \in \L^\circ} \A_g f(\l^\circ) = 0.
		\end{aligned}
	\end{equation}
\end{lemma}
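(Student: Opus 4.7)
The plan is to combine Lemma~\ref{lem_Fourier_symp}, which identifies $D_{1/\sqrt{2}}\,\W_g f$ and $D_{\sqrt{2}}\,\A_g f$ as $(-1)$--eigenfunctions of the symplectic Fourier transform whenever $-g^\vee = g$, with the symplectic Poisson summation formula on a suitably rescaled, self-adjoint lattice of volume $1$. On such a lattice Poisson summation reduces the eigenvalue identity $\F_\sigma F = -F$ to the algebraic statement $\sum F = -\sum F$, forcing $\sum F = 0$.

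First I would rescale. Since $\L$ is symplectic with $\vol(\L) = 2^{-d}$, the lattice $\L_1 := \sqrt{2}\,\L$ is again symplectic and satisfies $\vol(\L_1) = 2^d \cdot 2^{-d} = 1$. The formula $\L_1^\circ = \vol(\L_1)^{-1/d}\,\L_1$, valid for symplectic lattices as recalled in Section~\ref{sec_TFA}, then gives $\L_1^\circ = \L_1$, so $\L_1$ is self-adjoint. Next, set $F := D_{1/\sqrt{2}}\,\W_g f$. Since $f,g\in S_0(\Rd)$ we have $F\in S_0(\R^{2d})$, so the symplectic Poisson summation formula applies pointwise. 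The odd parity of $g$ translates into $-g^\vee = g$, so Lemma~\ref{lem_Fourier_symp} yields $\F_\sigma F = -F$. Applying Poisson to $F$ on $\L_1$ at $z=0$ gives
\begin{equation}
\sum_{\l_1 \in \L_1} F(\l_1) \;=\; \sum_{\l_1 \in \L_1} \F_\sigma F(\l_1) \;=\; -\sum_{\l_1 \in \L_1} F(\l_1),
\end{equation}
so the sum vanishes. The change of variable $\mu = \l_1/\sqrt{2}$ then identifies $\sum_{\l_1\in\L_1} F(\l_1) = \sum_{\mu\in\L} \W_g f(\mu)$, which is the first identity.

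For the sum over $\L^\circ$ the same procedure applies to $G := D_{\sqrt{2}}\,\A_g f$. Lemma~\ref{lem_Fourier_symp} again yields $\F_\sigma G = -G$, the same self-adjoint lattice $\L_1$ has volume $1$, and a symplectic Poisson summation of $G$ on $\L_1$ yields $\sum_{\l_1 \in \L_1} G(\l_1) = 0$. After the change of variable $\mu = \sqrt{2}\,\l_1 \in \sqrt{2}\,\L_1 = 2\L = \L^\circ$, this becomes $\sum_{\l^\circ \in \L^\circ}\A_g f(\l^\circ) = 0$.

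The only delicate point is the bookkeeping of the scaling: one has to check that the single factor $\sqrt{2}$ simultaneously makes $\L_1$ self-adjoint and of volume exactly $1$, and that the dilations $D_{1/\sqrt{2}}$ and $D_{\sqrt{2}}$ correctly reindex the sums over $\L_1$ back onto $\L$ and $\L^\circ$. The hypothesis $\vol(\L)^{-1}=2^{d}$ is precisely the value that makes these two normalisations compatible with the scaling factor $\sqrt{2}$ coming from Lemma~\ref{lem_Fourier_symp}; once that is verified, each vanishing is a one-line consequence of Poisson summation.
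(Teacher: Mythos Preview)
Your proof is correct. It coincides with the \emph{alternative} proof the paper sketches immediately after its main argument: rescale to a self-adjoint lattice of volume $1$, apply Lemma~\ref{lem_Fourier_symp} to identify $D_{1/\sqrt{2}}\W_g f$ (resp.\ $D_{\sqrt{2}}\A_g f$) as a $(-1)$--eigenfunction of $\F_\sigma$, and conclude via Poisson summation.

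The paper's primary proof is organized slightly differently: it applies Poisson summation directly on $\L$ (not on a rescaled lattice) and then uses Propositions~\ref{pro_symp_Fourier} and~\ref{pro_algebraic} in sequence rather than the packaged eigenfunction statement of Lemma~\ref{lem_Fourier_symp}. Concretely,
\[
\sum_{\l\in\L}\W_g f(\l)
= 2^d\sum_{\l^\circ\in\L^\circ}\A_g f(\l^\circ)
= 2^d\sum_{\l^\circ\in\L^\circ}2^{-d}\,\W_{g^\vee} f(2^{-1}\l^\circ)
= -\sum_{\l\in\L}\W_g f(\l),
\]
using $g^\vee=-g$ and $2^{-1}\L^\circ=\L$. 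The two arguments are equivalent; yours hides the factor bookkeeping inside the dilation operators and the volume-$1$ normalization, while the paper's main proof tracks the $2^d$ and $2^{-d}$ factors explicitly and never leaves the original lattice pair $(\L,\L^\circ)$.
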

\begin{proof}
	By the symplectic version of Poisson's summation formula we have
	\begin{equation}
		\sum_{\l \in \L} \W_g f(\l) =  \underbrace{\vol(\L)^{-1}}_{=2^d} \sum_{\l^\circ \in \L^\circ} \F_\sigma \left( \W_g f \right)(\l^\circ)
	\end{equation}
	By Proposition \ref{pro_symp_Fourier} we have
	\begin{equation}
		2^d \sum_{\l^\circ \in \L^\circ} \F_\sigma \left( \W_g f \right)(\l^\circ) = 2^d \sum_{\l^\circ \in \L^\circ} \A_g f(\l^\circ)
	\end{equation}
	and by the algebraic relation in Proposition \ref{pro_algebraic} we have
	\begin{equation}
		2^d \sum_{\l^\circ \in \L^\circ} \A_g f(\l^\circ) = 2^d \sum_{\l^\circ \in \L^\circ} 2^{-d} \W_{g^\vee} f(2^{-1} \l^\circ) = - \sum_{\l \in \L} \W_g f(\l),
	\end{equation}
	since $g^\vee = - g$, $\vol(\L)^{-1} = 2^d$ and, hence, $2^{-1} \L^\circ = \L$ as $\L$ is symplectic. Therefore, the statement about the Wigner distribution follows. The statement for the ambiguity function follows analogously.
\end{proof}
An alternative (but equivalent) proof can be established by using Lemma \ref{lem_Fourier_symp}: Let $f, g \in S_0(\Rd)$, $-g^\vee = g$ and $\vol(\L)^{-1} = 1$ (note that in this case $\L = \L^\circ$), then
\begin{equation}
	\sum_{\l \in \L} D_{\frac{1}{\sqrt{2}}} \W_g f (\l) = \sum_{\l^\circ \in \L^\circ} \F_\sigma \left(D_{\frac{1}{\sqrt{2}}} \W_g f \right) (\l) = \sum_{\l \in \L} - D_{\frac{1}{\sqrt{2}}} \W_g f (\l).
\end{equation}
The analogous statement for $\A_g f$ obviously holds as well. Now, note that dilating the lattice and dilating the Wigner distribution are two equivalent ways to establish the result.

\section{Sharp Frame Bounds}\label{sec_sharp_bounds}
In this section we have a closer look at the frame operator and its spectrum. We will mainly follow Janssen's articles \cite{Jan95, Jan96}. The main differences are that we formulate the results for symplectic lattices in $2d$--dimensional phase space rather than for separable lattices in 2--dimensional phase space. Also, we use symmetric time--frequency shifts which only changes the appearing phase factors. For non--separable lattices, they will be easier to handle later on with this approach. Building on the results of the previous section, we will finally show that for odd windows in $S_0(\Rd)$ and $\L \subset \R^{2d}$ a lattice in phase space with $\vol(\L)^{-1} = 2^d$, the lower frame bound of the Gabor system $\widetilde{\G}(g,\L)$ vanishes. By the comments in Section \ref{subsec_symmetric} this is equivalent to the fact that the Gabor system $\G(g,\L)$ does not generate a frame, which is our main result.

The frame operator associated to the Gabor system $\widetilde{\G}(g, \L)$ is denoted by $\widetilde{S}_{g,\L}$ and given by
\begin{equation}
	\widetilde{S}_{g,\L} f = \sum_{\l \in \L} \left\langle f, \rho(\l) g \right\rangle \rho(\l) g, \qquad f \in \Lt.
\end{equation}
Another, very useful, representation of the frame operator is due to Janssen \cite{Jan95} and usually called Janssen's representation of the frame operator
\begin{equation}
	\widetilde{S}_{g,\L} = \vol(\L)^{-1} \sum_{\l^\circ \in \L^\circ} \left\langle g, \rho(\l^\circ) g \right\rangle \rho(\l^\circ).
\end{equation}
The frame operator is the composition of the analysis and the synthesis operator, which are adjoint to each other. The analysis operator maps a function from $\Lt$ to $\lp{2}(\L)$, $\L \subset \R^{2d}$ and is given by
\begin{equation}
	\widetilde{G}_{g,\L} f = \left( \langle f, \rho(\l) g \rangle \right)_{\l \in \L}.
\end{equation}
Its adjoint is called the synthesis operator, mapping sequences $c = (c_\l)_{\l \in \L} \in \lp{2}(\L)$ to $\Lt$, and is given by
\begin{equation}
	\widetilde{G}^*_{g,\L} c = \sum_{\l \in \L} c_\l \, \rho(\l) g.
\end{equation}
The frame operator can be written as
\begin{equation}
	\widetilde{S}_{g,\L} = \widetilde{G}^*_{g,\L} \widetilde{G}_{g,\L}.
\end{equation}
The following result is a straightforward generalization of the main result in \cite{Jan95}, where Janssen showed it for $d=1$ and $\L$ separable.
\begin{proposition}
	The following are equivalent:
	\begin{enumerate}[label=(\roman*)]
		\item $\widetilde{\G}(g, \L)$ is a frame with bounds $A$ and $B$.		
		\item $A \, I_{\Lt} \leq \widetilde{S}_{g,\L} \leq B \, I_{\Lt}$.
		\item $A \, I_{\lp{2}(\L^\circ)} \leq \vol(\L)^{-1} \widetilde{G}_{g,\L^\circ} \widetilde{G}^*_{g,\L^\circ} \leq B \, I_{\lp{2}(\L^\circ)}$.
	\end{enumerate}
\end{proposition}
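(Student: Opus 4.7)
The plan is to split into the straightforward equivalence (i) $\Leftrightarrow$ (ii) and the more substantive duality-type equivalence (ii) $\Leftrightarrow$ (iii).

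For (i) $\Leftrightarrow$ (ii) I would simply expand
$$\langle \widetilde{S}_{g,\L} f, f\rangle = \langle \widetilde{G}^*_{g,\L}\widetilde{G}_{g,\L} f, f\rangle = \sum_{\l \in \L}|\langle f, \rho(\l)g\rangle|^2,$$
so that the operator sandwich $A\, I_{\Lt} \leq \widetilde{S}_{g,\L} \leq B\, I_{\Lt}$ is precisely the frame inequality \eqref{eq_frame_symmetric} rewritten.

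For (ii) $\Leftrightarrow$ (iii) my strategy is to invoke Janssen's representation of $\widetilde{S}_{g,\L}$ stated just before the proposition. A preliminary observation is that $\widetilde{S}_{g,\L}$ commutes with every $\rho(\mu)$ for $\mu \in \L$: the composition rule $\rho(\mu)\rho(\l) = e^{-\pi i \sigma(\mu,\l)}\rho(\mu+\l)$ causes the phase factors to cancel out of the rank-one pieces $f \mapsto \langle f, \rho(\l)g\rangle\rho(\l)g$. Hence $\widetilde{S}_{g,\L}$ lies in the commutant of the symmetric representation of $\L$, whose weak closure is generated by $\{\rho(\l^\circ)\}_{\l^\circ \in \L^\circ}$ (von Neumann bicommutant), and pairing the resulting expansion with $g$ identifies the coefficients as $\vol(\L)^{-1}\langle g, \rho(\l^\circ)g\rangle$, which is Janssen's formula.

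The main obstacle, and the heart of the Wexler--Raz / Ron--Shen duality used in \cite{Jan95}, is to turn Janssen's representation into the quantitative spectral statement (iii), i.e.\ to transfer the sandwich bounds between $\widetilde{S}_{g,\L}$ on $\Lt$ and $\vol(\L)^{-1}\widetilde{G}_{g,\L^\circ}\widetilde{G}^*_{g,\L^\circ}$ on $\lp{2}(\L^\circ)$. My plan here is the resolvent method: for each $\lambda \neq 0$ outside the spectrum of one of the two operators, Janssen's series on the dual lattice supplies an explicit two-sided inverse of $\lambda - (\cdot)$ for the other, so the nonzero spectra coincide. In particular, the lower frame bound $A$ is preserved -- including the degenerate value $A=0$, which is the case actually needed for the application to Theorem \ref{thm_main} in Section \ref{sec_sharp_bounds} -- which completes the equivalence.
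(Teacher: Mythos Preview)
The paper does not actually give a proof of this proposition; it only states it as ``a straightforward generalization of the main result in \cite{Jan95}'' and moves on. So there is no paper proof to compare against, and I will simply assess your sketch on its own terms.

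Your argument for (i) $\Leftrightarrow$ (ii) is correct and standard.

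For (ii) $\Leftrightarrow$ (iii) there are two soft spots. First, you re-derive Janssen's representation via a commutant argument, but the nontrivial step is not the bicommutant theorem --- it is the identification of the commutant of $\{\rho(\l):\l\in\L\}$ with the von Neumann algebra generated by $\{\rho(\l^\circ):\l^\circ\in\L^\circ\}$, which is itself essentially the duality you are trying to prove. Since the paper already states Janssen's representation, you may simply quote it and skip this detour. Second, the ``resolvent method'' is too vague as written: you do not say what the explicit two-sided inverse is or why the candidate series converges. A cleaner route, once Janssen's representation is in hand, is to observe that $\widetilde{S}_{g,\L}$ and $\widetilde{G}_{g,\L^\circ}\widetilde{G}^*_{g,\L^\circ}$ are the images of the \emph{same} sequence $a=(\langle g,\rho(\l^\circ)g\rangle)_{\l^\circ\in\L^\circ}$ under two $*$-representations of the twisted convolution algebra of $\L^\circ$: the Schr\"odinger representation $a\mapsto\sum a_{\l^\circ}\rho(\l^\circ)$ on $\Lt$, and the left regular (twisted) representation on $\lp{2}(\L^\circ)$ (the latter is exactly the Gram matrix, as the paper computes right after the proposition). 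The equality of spectra --- hence of the sandwich bounds --- then follows because $\L^\circ\cong\Z^{2d}$ is abelian (amenable), so both representations induce the same $C^*$-norm; alternatively, this is precisely what Janssen establishes by direct computation in \cite{Jan95}. Either way you should point to that result rather than leave the spectral transfer as a promise.
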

The most interesting part for this work is that we can compute the frame bounds via the eigenvalues of the bi--infinite matrix, indexed by the adjoint lattice;
\begin{equation}
	\widetilde{G}_{g,\L^\circ} \widetilde{G}^*_{g,\L^\circ} = \left( \langle \rho(\l^\circ) g, \rho(\l^\circ{'}) g \rangle \right)_{\l^\circ, \l^\circ{'} \in \L^\circ}.
\end{equation}
We proceed by calculating the values of the above matrix;
\begin{equation}
	\langle \rho(\l^\circ) g, \rho(\l^\circ{'}) g \rangle = \langle g, \rho(-\l^\circ) \rho(\l^\circ{'}) g \rangle = e^{\pi i \, \sigma (\l^\circ, \l^\circ{'})} \langle g, \rho(\l^\circ{'}-\l^\circ) g \rangle, \qquad \l^\circ, \l^\circ{'} \in \L^\circ.
\end{equation}
Assume that $\vol(\L)^{-1/d} \in \N$, then the entries in $\vol(\L)^{-1} \widetilde{G}_{g,\L^\circ} \widetilde{G}^*_{g,\L^\circ}$ are constant along diagonals, i.e., $\vol(\L)^{-1} \widetilde{G}_{g,\L^\circ} \widetilde{G}^*_{g,\L^\circ}$ has a Laurent structure. For the time--frequency shifts $\rho(\l^\circ{'}-\l^\circ)$ the argument is obvious, the only justification we have to make is that the phase factor $e^{\pi i \, \sigma(\l^\circ,\l^\circ{'})}$ is constant along diagonals. We show that $\sigma(\l^\circ,\l^\circ{'})$ is an integer multiple of $\vol(\L)^{-1/d}$. Let $\L = \alpha ^{1/2d} S \Z^{2d}$, then $\vol(\L) = \alpha$ and $\L^\circ = \alpha^{-1/2d} S \Z^{2d}$. Since our lattice is symplectic by assumption, the symplectic form  $\sigma$ is independent from the matrix $S$ and we have
\begin{equation}
	e^{\pi i \, \sigma(\l^\circ,\l^\circ{'})} = e^{\pi i \, \sigma\left(\alpha^{-1/2d} S \, (k,l)^T, \, \alpha^{-1/2d} S \, (k',l')^T\right)} = e^{\vol(\L)^{-1/d} \pi i (k \cdot l' - k' \cdot l)}, \qquad k,l,k',l' \in \Z^d.
\end{equation}
In the case that $\vol(\L)^{-1/d}$ is even, the phase--factor equals $+1$ and can be neglected.

However, if $\vol(\L)^{-1/d}$ is odd, the phase--factor takes the role of an alternating sign, which is constant along diagonals, i.e., it is either $+1$ or $-1$ depending on the diagonal built by $\l^\circ - \l^\circ{'}$ being constant. For this reason we focus on the case where $\vol(\L)^{-1/d}$ is even.

It follows from the general theory on Toeplitz (matrices) and Laurent operators that the spectrum of such a (double) bi--infinite matrix can be computed via the essential infimum and supremum of a Fourier series, where the coefficients of the series are derived from the entries in the matrix. Using the above arguments, the following result is a straight forward generalization of  the result derived by Janssen in \cite{Jan96} (see Appendix \ref{app_Janssen} for Janssen's result).
\begin{proposition}\label{pro_series}
	For $g \in \Lt$ and $\L \subset \R^{2d}$ with $\vol(\L)^{-1/d} \in \N$ the Gabor system $\widetilde{\G}(g, \L)$ possesses the optimal frame bounds
	\begin{align}
		A & = \essinf_{z \in \R^2} \, \vol(\L)^{-1} \sum_{\l^\circ{'} - \l^\circ \in \L^\circ} e^{\pi i \, \sigma(\l^\circ, \l^\circ{'})} \A g(\l^\circ{'}-\l^\circ) \, e^{2 \pi i \, \sigma(\l^\circ{'}-\l^\circ, z)}\\
		B & = \esssup_{z \in \R^2} \, \vol(\L)^{-1} \sum_{\l^\circ{'} - \l^\circ \in \L^\circ} e^{\pi i \, \sigma(\l^\circ, \l^\circ{'})} \A g(\l^\circ{'}-\l^\circ) \, e^{2 \pi i \, \sigma(\l^\circ{'}-\l^\circ, z)}.
	\end{align}
\end{proposition}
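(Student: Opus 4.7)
The plan is to combine the previous proposition, which identifies the frame bounds with spectral bounds of the operator $\vol(\L)^{-1}\widetilde{G}_{g,\L^\circ}\widetilde{G}^*_{g,\L^\circ}$, with the classical fact that the spectrum of a doubly infinite Laurent operator equals the essential range of its symbol. Under the assumption that $\vol(\L)^{-1/d}$ is an even integer (the case the paper has already singled out), the phase factor $e^{\pi i \sigma(\l^\circ,\l^\circ{'})}$ equals $+1$ identically on $\L^\circ\times\L^\circ$, so the preceding entry computation collapses to
$$\big(\vol(\L)^{-1}\widetilde{G}_{g,\L^\circ}\widetilde{G}^*_{g,\L^\circ}\big)_{\l^\circ,\l^\circ{'}} = \vol(\L)^{-1}\,\A g(\l^\circ{'}-\l^\circ).$$
This matrix is constant along diagonals, hence defines a Laurent operator on $\lp{2}(\L^\circ)$.

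First I would justify the symbol. Since $g\in S_0(\Rd)$ forces $\A g \in S_0(\R^{2d}) \subset L^1(\R^{2d})$, a standard $S_0$-sampling argument (or direct appeal to Poisson summation) shows that the restriction $\A g|_{\L^\circ}$ lies in $\lp{1}(\L^\circ)$. The formal series
$$\phi(z) = \vol(\L)^{-1}\sum_{\mu\in\L^\circ} \A g(\mu)\, e^{2\pi i\,\sigma(\mu,z)}$$
therefore converges absolutely and uniformly to a continuous function, periodic with respect to $\L$. Next, via the symplectic Fourier series on a fundamental domain of $\L$, one obtains a unitary equivalence between the Laurent operator above and the multiplication operator $M_\phi$ on $L^2(\R^{2d}/\L)$; this is precisely the $2d$-dimensional symplectic analogue of the transfer Janssen uses in \cite{Jan96}, and no new analytic ingredient is needed beyond absolute summability of the symbol.

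Finally, $\vol(\L)^{-1}\widetilde{G}_{g,\L^\circ}\widetilde{G}^*_{g,\L^\circ}$ is self-adjoint and positive, which forces $\phi$ to be real-valued almost everywhere, and the spectrum of $M_\phi$ is its essential range. Combining this with item (iii) of the preceding proposition yields $A=\essinf_z \phi(z)$ and $B=\esssup_z \phi(z)$, exactly as claimed (the factor $e^{\pi i \sigma(\l^\circ,\l^\circ{'})}$ in the statement being trivially $1$ under the evenness hypothesis). I expect the main obstacle to be the bookkeeping around this phase factor: one must verify that it really is globally trivial on $\L^\circ\times\L^\circ$ in the even regime (rather than merely constant on each individual diagonal), because this is what turns the matrix into a genuine Laurent operator to which the symbol calculus applies; outside the even regime the matrix is only block-Laurent and a more delicate Zak-transform argument would replace the step above.
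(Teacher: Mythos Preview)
Your approach is essentially the paper's own: the paper does not supply a detailed argument for this proposition but simply computes the matrix entries of $\widetilde{G}_{g,\L^\circ}\widetilde{G}^*_{g,\L^\circ}$, observes the Laurent structure, and declares the result a ``straight forward generalization'' of Janssen's separable-lattice computation via the general Toeplitz/Laurent symbol calculus. You have merely spelled out that last step (unitary equivalence with a multiplication operator, spectrum equals essential range of the symbol), which is exactly what the paper intends.

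Two small points of friction. First, the proposition is stated for $g\in L^2(\Rd)$, and the paper explicitly remarks afterward that the series ``need not be convergent'' in that generality; your insertion of the hypothesis $g\in S_0(\Rd)$ to secure $\ell^1$-summability of the symbol is not part of the stated proposition, though it is harmless since the only application is to $S_0$ windows. Second, you treat only the even case $\vol(\L)^{-1/d}\in 2\N$, whereas the proposition is nominally asserted for all of $\N$; the paper itself announces just before the proposition that it will ``focus on the case where $\vol(\L)^{-1/d}$ is even,'' so your restriction matches what is actually argued there (and is all that the proof of Theorem~\ref{thm_main} requires). Your closing caveat about the odd regime being only block-Laurent is in fact sharper than the paper's passing claim that the sign is ``constant along diagonals.''
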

The above series is real--valued, since we sum over a lattice the imaginary parts also appear as complex conjugates an cancel out. Note that the above series need not be convergent. In this case the upper bound might not be finite and the Gabor system might not constitute a frame. However, for windows in Feichtinger's algebra the upper bound is always finite\footnote{It follows from the results in Tolimieri and Orr \cite{TolOrr95} that $\vol(\L)^{-1} \sum_{\l^\circ \in \L^\circ} |\A g(\l^\circ)|$ always is an upper bound, however, usually not the optimal upper bound. For $g \in S_0(\Rd)$ this expression is always finite.}. As the frame operator $\widetilde{S}_{g,\L}$ is self--adjoint and positive semi--definite we have
\begin{equation}
	0 \leq A = \essinf_{z \in \R^2} \, \vol(\L)^{-1} \sum_{\l^\circ{'} -\l^\circ \in \L^\circ} e^{\pi i \, \sigma(\l^\circ, \l^\circ{'})} \A g(\l^\circ{'}-\l^\circ) \, e^{2 \pi i \, \sigma(\l^\circ{'}-\l^\circ, z)},
\end{equation}
by the theory of Laurent operators. We have now all the tools we need to prove Theorem \ref{thm_main}.

\subsection*{Proof of Theorem \ref{thm_main}}
In order to prove our main result, we will show that the lower frame bound vanishes under the assumptions of Theorem \ref{thm_main}. For $\vol(\L)^{-1} = 2^d$ and due to the fact that $\l^\circ{'} - \l^\circ \in \L^\circ$, the series in Proposition \ref{pro_series} reduces to
\begin{equation}
	\phi(z) = \vol(\L)^{-1} \sum_{\l^\circ \in \L^\circ} \A g(\l^\circ) \, e^{2 \pi i \, \sigma(\l^\circ, z)}.
\end{equation}
Now observe that
\begin{equation}
	\phi(0) = \vol(\L)^{-1} \sum_{\l^\circ \in \L^\circ} \A g(\l^\circ),
\end{equation}
which is, up to the factor $\vol(\L)^{-1}$, just the series from Lemma \ref{lem_sum_0}. Hence, we conclude that for $\vol(\L)^{-1} = 2^d$ and $g \in S_0(\Rd)$, $g^\vee = -g$ we have $\phi(0) = 0$. This is equivalent to the statement that the lower frame bound of the system $\widetilde{\G}(g, \L)$ vanishes. The same is true for the lower frame bound of the Gabor system $\G(g,\L)$. Hence, the proof of Theorem \ref{thm_main} is complete.

\begin{appendix}
\section{Janssen's Proposition}\label{app_Janssen}
	We will shortly state Janssen's proposition from \cite{Jan96} which he used to compute sharp frame bounds for the $\Lt[]$ case. In his formulation, Janssen used the STFT rather than the ambiguity function and separable lattices rather than general lattices. Hence, Janssen's result is a special case of Proposition \ref{pro_series}, but already carries the general idea in it. For $g \in \Lt[]$ and a lattice $\L_{(\alpha,\beta)} = \alpha \Z \times \beta \Z$, $(\alpha \beta)^{-1} \in \N$, the Gabor system $\G(g,\L_{(\alpha, \beta)})$ possesses the optimal frame bounds
	\begin{align}
		A & = \essinf_{(x,\omega) \in \R^2} \, (\alpha \beta) ^{-1} \sum_{k-k',l-l' \in \Z} \V g\left( \tfrac{k-k'}{\beta}, \tfrac{l-l'}{\alpha} \right) \, e^{2 \pi i \, ((k-k') x + (l-l') \omega)}\\
		B & = \esssup_{(x,\omega) \in \R^2} \, (\alpha \beta) ^{-1} \sum_{k-k',l-l' \in \Z} \V g\left( \tfrac{k-k'}{\beta}, \tfrac{l-l'}{\alpha} \right) \, e^{2 \pi i \, ((k-k') x + (l-l') \omega)}
	\end{align}
\end{appendix}
We note that the phase factor is now implicitly appearing in the STFT and that the standard Poisson summation formula (and not its symplectic version) was used.

\end{document}